\documentclass[reqno,12pt]{amsart}
\usepackage{amssymb}
\usepackage{amsmath}
\usepackage{amsthm}
\usepackage{mathtools}
\usepackage{amsfonts}
\usepackage{xcolor}
\usepackage{graphicx}
\usepackage{tikz}
\usepackage{comment}
\usepackage{microtype}
\usepackage{enumitem}
\usepackage[
    colorlinks = true, 
    linkcolor={blue},
	citecolor={blue},
	urlcolor={black!30!blue}
]{hyperref}
\usepackage[alphabetic, initials]{amsrefs}
\renewcommand{\MR}[1]{}

\title[Primitive approximation for fixed frequencies]{Primitive inhomogeneous approximation for fixed non-singular frequencies}

\author[X.\ Wang]{Xueyin Wang}
\address{[X.\ Wang] Department of Mathematics, Texas A\&M University, College Station, TX 77843, USA}
\email{\href{mailto:xueyin@tamu.edu}{xueyin@tamu.edu}}

\theoremstyle{plain}
\newtheorem{theorem}{Theorem}[section]

\newtheorem{lemma}[theorem]{Lemma}
\newtheorem{proposition}[theorem]{Proposition}

\theoremstyle{definition}

\newtheorem{remark}[theorem]{Remark}

\numberwithin{equation}{section}

\begin{document}
\begin{abstract}
	We prove a high-dimensional primitive inhomogeneous Diophantine approximation result for fixed non-singular simultaneous frequencies. The frequency class is explicit and has full Lebesgue measure.
\end{abstract}

\maketitle	

\section{Introduction}
In this paper, we prove a higher-dimensional analogue of primitive inhomogeneous approximation along a \emph{fixed} frequency $\alpha$. In dimension one, Jitomirskaya and Liu \cite{MR3996322} proved that for every irrational $\alpha\in\mathbb{R}\setminus\mathbb{Q}$, for Lebesgue almost every $\gamma\in\mathbb{R}$,
\begin{equation*}
    \liminf_{n\to\infty} n \min_{\substack{m\in\mathbb{Z}\\ \gcd(n,m)=1}} |\gamma-n\alpha+m| = 0.
\end{equation*}
In higher dimensions, the natural analogue of the coprime condition is the joint primitive condition
\begin{equation*}
    \gcd(n,m_1,\ldots,m_d)=1,
\end{equation*}
which means that the integer vector $(n,m_1,\ldots,m_d)\in\mathbb{Z}^{d+1}$ is primitive.

For $x\in\mathbb{R}^{d}$, define
\begin{equation*}
    \|x\|_{\mathbb{T}^{d},\infty} = \min_{k\in\mathbb{Z}^{d}}\|x-k\|_{\infty}.
\end{equation*}
In particular, we write $\|x\|_{\mathbb{T}}$ when $d=1$ for simplicity. We define the non-singular frequency set $\Lambda^{d}\subseteq\mathbb{R}^{d}$ as follows. We say that $\alpha=(\alpha_1,\ldots,\alpha_d)\in\Lambda^{d}$ if $1,\alpha_1,\ldots,\alpha_d$ are linearly independent over $\mathbb{Q}$ and there exist $c_0>0$ and a sequence $N_j\to\infty$ such that
\begin{equation}\label{Gintro}
    \min_{1\leqslant n\leqslant N_j}\|n\alpha\|_{\mathbb{T}^{d},\infty}\geqslant c_0N_j^{-1/d}.
\end{equation}
The set $\Lambda^{d}$ has full Lebesgue measure (see \hyperref[fullLebLambda]{Theorem~\ref{fullLebLambda}}).

Our main result is as follows.

\begin{theorem}\label{mainthm}
    Let $\alpha\in \Lambda^{d}$ with $d\geqslant 2$. Then for Lebesgue almost every $\gamma \in \mathbb{R}^d$,
    \begin{equation}\label{mainclaim}
        \liminf_{n\to\infty}
        n^{1/d} \min_{\substack{m\in\mathbb{Z}^d\\ \gcd(n,m_1,\ldots,m_d)=1}} \|\gamma-n\alpha+m\|_\infty= 0.
    \end{equation}
\end{theorem}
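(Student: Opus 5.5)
The plan is to fix $\varepsilon>0$ and show that the set
\[
A_\varepsilon=\Bigl\{\gamma\in\mathbb{R}^d:\ \|\gamma-n\alpha+m\|_\infty<\varepsilon n^{-1/d}\ \text{for infinitely many primitive }(n,m)\in\mathbb{Z}^{d+1}\Bigr\}
\]
has full measure. Intersecting over $\varepsilon=1/k$ then gives \eqref{mainclaim}. Since $\gamma$ lives in $\mathbb{R}^d$ and the primitivity condition depends on $m$, not only on $m \bmod \mathbb{Z}^d$, I will work locally. By the Lebesgue density theorem it suffices to find $c=c(d,c_0)>0$ such that $|A_\varepsilon\cap Q|\geqslant c\,\varepsilon^d|Q|$ for every cube $Q$. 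Indeed, if the complement of $A_\varepsilon$ had positive measure, it would have a density point, and a small cube around that point would violate this bound.

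To get the local bound, fix $Q$ and use the scales $N_j$ from \eqref{Gintro}. For $n\in[N_j/2,N_j]$ and primitive $(n,m)$, let $B_{n,m}$ be the cube of radius $\varepsilon N_j^{-1/d}$ centred at $n\alpha-m$. Take $\varepsilon<c_0/2$. By \eqref{Gintro}, the points $n\alpha \bmod \mathbb{Z}^d$ with $n\leqslant N_j$ are $c_0N_j^{-1/d}$-separated on the torus. Hence all the cubes $B_{n,m}$ are pairwise disjoint (distinct $n$, or the same $n$ with distinct $m$). Every $\gamma\in B_{n,m}$ satisfies the inequality defining $A_\varepsilon$ with this $n$, up to the harmless factor $2^{1/d}$, which I absorb into $\varepsilon$. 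Therefore
\[
\Bigl|Q\cap\bigcup_{n,m}B_{n,m}\Bigr|\gtrsim \varepsilon^d N_j^{-1}\cdot\#\{(n,m)\ \text{primitive}:\ n\in[N_j/2,N_j],\ n\alpha-m\in Q\}.
\]
Once the count on the right is shown to be $\gtrsim N_j|Q|$, the lower bound holds uniformly in $j$. Since $A_\varepsilon\cap Q\supseteq\limsup_j(Q\cap\bigcup B_{n,m})$, and the measure of a limsup set is at least the limsup of the measures, the bound $|A_\varepsilon\cap Q|\geqslant c\varepsilon^d|Q|$ follows.

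The counting step uses Möbius inversion over the common divisor $q$ of $(n,m)$. Write $n=qn'$ and $m=qm'$. The condition becomes $n'(q\alpha)-m'\in q^{-1}Q$, so
\[
\#=\sum_{q\geqslant1}\mu(q)\,\#\bigl\{n'\in[N_j/2q,N_j/q]:\ n'q\alpha \bmod \mathbb{Z}^d\in q^{-1}Q\bigr\}.
\]
Here $m'$ is determined by $n'$ when $Q$ is small. For each fixed $q$, the rational independence of $1,\alpha_1,\dots,\alpha_d$ gives equidistribution of $n'q\alpha$, so the $q$-th term is $(1+o(1))\tfrac{N_j}{2q}\cdot\tfrac{|Q|}{q^{d}}$. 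For $q\leqslant q_0$ fixed this yields the main term $\tfrac{N_j|Q|}{2}\sum_{q\leqslant q_0}\mu(q)q^{-d-1}$. Because $d\geqslant2$, this tends to $\tfrac{N_j|Q|}{2\zeta(d+1)}$ as $q_0\to\infty$, which is bounded below. The role of $d\geqslant 2$ is that the density of primitive vectors remains positive.

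The main obstacle is controlling the tail $q>q_0$ uniformly in $j$. There, equidistribution is not uniform, and I must rely on the separation \eqref{Gintro}. The points $n'q\alpha$ with $n'\leqslant N_j/q$ are a subset of the $c_0N_j^{-1/d}$-separated set $\{n\alpha\}_{n\leqslant N_j}$. A packing bound therefore gives at most $O\bigl((|Q|^{1/d}q^{-1}N_j^{1/d}/c_0+1)^d\bigr)=O(N_j|Q|q^{-d}+1)$ points in $q^{-1}Q$. The first part sums to $O(N_j|Q|q_0^{1-d})$, which is harmless, but the ``$+1$'' summed over $q\leqslant N_j$ is of the same size as the main term. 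To handle it, I would first try to show that only $o(N_j)$ values of $q$ admit any solution at all. Nonemptiness forces $\|n'q\alpha-\text{(point of }Q)\|\lesssim q^{-1}$, and for $q\gg N_j^{1/(d+1)}$ this is a strong simultaneous-approximation event. I would bound the number of such $q$ by applying the same separation/packing argument to pairs. Failing that, I would replace the sharp cutoff by choosing the side of $Q$ proportional to a power of $N_j^{-1/d}$, so that every $q$ with $q^{-1}Q$ below the separation scale contributes at most one point and these contributions can be compared directly against the measure bound.
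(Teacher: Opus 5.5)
Your overall architecture is sound, and in one respect it differs from the paper. You replace the paper's quasi-independence lemma and second-moment Borel--Cantelli argument by a Lebesgue density argument. That argument only needs a lower bound $\gtrsim N_j|Q|$ on the number of primitive $(n,m)$ with $n\le N_j$ and $n\alpha-m\in Q$, for each fixed cube $Q$ and all large $j$. This reduction is correct: reverse Fatou handles the limsup, and restricting to $n\in[N_j/2,N_j]$ even spares you from removing the orbit.

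The counting estimate, however, is the heart of the theorem, and there your proposal has a genuine gap, which you yourself identify. Expanding over \emph{all} square-free $q$ forces you to bound $\sum_{q>q_0}c_q$. Packing only gives $c_q\le C(1+N_j|Q|q^{-d})$, and the ``$+1$'' parts add up to $\asymp N_j$. Neither proposed repair works as stated.
For (a): for a cube $Q\subset b+[0,1)^d$ with $b\neq 0$, the target $q^{-1}Q$ sits near the point $q^{-1}b$, which moves with $q$. So nonemptiness is not a homogeneous simultaneous-approximation event, and you give no way to bound the number of admissible $q$ via \eqref{Gintro}.
For (b): making each $q$ contribute at most one point does not reduce the number of contributing $q$. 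Moreover, cubes shrinking with $j$ destroy the fixed-set Weyl equidistribution you need for small $q$.

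The missing idea is to sieve only by primes rather than invert over all $q$. Let $P_R=\prod_{p\le R}p$. The count $S_R$ of $n$ with $n\alpha-m_n\in Q$ and no prime $\le R$ dividing $\gcd(n,m_n)$ equals the finite sum $\sum_{q\mid P_R}\mu(q)c_q$, which Weyl equidistribution handles for fixed $R$. Every non-primitive $n$ counted by $S_R$ has a prime divisor $p>R$ of $\gcd(n,m_n)$. For each such $p$, writing $n=pn'$ puts $\{n'\alpha\}$, $n'\le N_j/p$, in a cube of side $|Q|^{1/d}/p$, so packing gives at most $C(1+N_j|Q|p^{-d})$ such $n$. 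Hence
\[
\#\{\text{primitive}\}\ \ge\ S_R-\sum_{R<p\le N_j}C\bigl(1+N_j|Q|p^{-d}\bigr)\ \ge\ S_R-C\pi(N_j)-C'N_j|Q|R^{1-d}.
\]
The ``$+1$'' terms now sum only over primes, giving $\pi(N_j)=o(N_j)$, and the remaining tail is small once $R$ is large. This is exactly the paper's Theorem~\ref{counting}. It is also where $d\ge 2$ genuinely enters, through $\sum_p p^{-d}<\infty$. Your remark that $d\ge 2$ is what keeps the density positive is off, since $1/\zeta(d+1)>0$ also for $d=1$. With this sieve in place, your density argument closes.

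A small slip: with $n=qn'$ and $m=qm'$, the condition is $n'\alpha-m'\in q^{-1}Q$, not $n'(q\alpha)-m'\in q^{-1}Q$. Your later computations implicitly use the correct form.
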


We first explain why the assumption $\alpha\in\Lambda^{d}$ is natural. Without the primitivity constraint, fixed-frequency inhomogeneous approximation has a long history beginning with Kurzweil \cite{MR73654}. In the case $d=1$, non-singularity is equivalent to irrationality by the classical estimate $\|q_{j}\alpha\|_{\mathbb{T}} \geqslant 1/(2q_{j+1})$ for the best approximation denominators $\{q_{j}\}$, which is consistent with the one-dimensional theorem of Jitomirskaya and Liu \cite{MR3996322}. In higher dimensions $d\geqslant2$, however, rational independence alone is not the correct fixed-frequency hypothesis. The appropriate substitute is the non-singularity condition from simultaneous Diophantine approximation. This point of view appears in work of Kim \cite{MR2335077}, Shapira \cite{MR3048195}, and more recently Beresnevich, Datta, Ghosh, and Ward \cite{MR4834219} on shrinking targets for torus actions.

Indeed, Shapira \cite{MR3048195} proved that for any fixed non-singular frequency $\alpha\in\Lambda^{d}$ and Lebesgue almost every $\gamma\in\mathbb{R}^{d}$,
\begin{equation}\label{secclaim}
    \liminf_{n\to\infty} n^{1/d}\min_{m\in\mathbb{Z}^{d}}\|\gamma-n\alpha+m\|_{\infty}=0.
\end{equation}
On the other hand, a question of Tseng \cite{MR2379490} asked whether \eqref{secclaim} holds for every rationally independent frequency. This is false. Galatolo and Peterlongo \cite{MR2600766} constructed a rationally independent but singular vector in $\mathbb{R}^{2}$ giving a negative answer to this question. Thus one cannot expect a fixed-frequency theorem such as \eqref{mainclaim} to hold for all rationally independent frequencies. Since this obstruction already appears without the primitive restriction, it is natural to formulate the primitive fixed-frequency theorem on the explicit full-measure class $\Lambda^{d}$.

We now compare \hyperref[mainthm]{Theorem~\ref{mainthm}} with known results on primitive approximation. In dimension one, Jitomirskaya and Liu \cite{MR3996322} proved the fixed-frequency coprime theorem for every irrational frequency. In higher dimensions, Dani, Laurent, and Nogueira \cite{MR3318261} introduced a flexible class of primitivity constraints associated with partitions of the coordinate directions and proved a doubly-metric Khintchine--Groshev theorem for primitive points. In the simultaneous setting considered here, their result implies that \eqref{mainclaim} holds for Lebesgue almost every pair $(\alpha,\gamma)$. By Fubini's theorem, this gives an abstract full-measure set of frequencies $\alpha$ for which \eqref{mainclaim} holds for Lebesgue almost every $\gamma$. However, this argument does not identify a concrete arithmetic class of admissible frequencies, and it does not give a fixed-frequency proof for every prescribed $\alpha$ in an explicit class.

More recently, Allen and Ram\'irez \cite{MR4832558} strengthened this framework. They proved that for any fixed $\gamma$, \eqref{mainclaim} holds for almost every $\alpha$. They also pointed out that the complementary problem, where $\alpha$ is fixed and one studies metric statements in $\gamma$, is natural. \hyperref[mainthm]{Theorem~\ref{mainthm}} addresses this fixed $\alpha$ direction at the scale $n^{-1/d}$ for the explicit full-measure class $\Lambda^{d}$ of non-singular simultaneous frequencies.

The novelty of \hyperref[mainthm]{Theorem~\ref{mainthm}} is therefore twofold. First, it is a fixed-frequency result: the frequency $\alpha$ may be prescribed in advance, provided $\alpha\in\Lambda^{d}$. Second, it shows that the primitive restriction does not destroy the critical shrinking-target scale $n^{-1/d}$ for every non-singular simultaneous frequency.

We now explain the idea of the proof. The proof consists of two main parts: a primitive discrepancy estimate along the good scales and a shrinking-target construction which produces a full-measure set of admissible targets $\gamma$. The primitive discrepancy estimate is proved in Section \ref{sec:discrepancy}. The proof is based on an elementary sieve. Fixed congruence conditions are handled by Weyl equidistribution, small prime divisors are removed by the M\"obius function sums, and the contribution of large prime divisors is controlled by the packing estimate coming from the good-scale condition \eqref{Gintro}. 

The construction of the full-measure set of targets is carried out in Section \ref{sec:targets}. At each good scale $N_{j}$, we place small boxes of suitable side length $r_{j}$ around the primitive orbit points $\{n\alpha\}$, and then show that the union of these boxes has asymptotically positive measure. We then prove a quasi-independence estimate between different good scales and apply a second-moment Borel--Cantelli lemma to obtain a full measure set of the targets $\gamma$.

Throughout this paper, we use the following notation. We write $A=O(B)$ if there exists a constant $C>0$ such that $|A|\leqslant C|B|$. Moreover, we write
$A=O_{r}(B)$  if the constant depends on the parameter $r$. We write $A=o(B)$ if $A/B\to0$ in the relevant limiting process, and write $A=o_{r}(B)$ if the convergence may depend on the fixed parameter $r$. Finally, $A\sim B$ means that $A/B\to1$.

\section{Preliminaries}

For $x\in\mathbb{R}$, $\lfloor x\rfloor$ denotes the greatest integer not exceeding $x$. We identify $\mathbb{T}^{d}$ with $[0,1)^{d}$. For $x \in \mathbb{R}$, let
\begin{equation*}
    \{x\} = x - \lfloor x \rfloor \in [0,1).
\end{equation*}
For any $A\subseteq [0,1)^{d}$ and $x\in [0,1)^{d}$, define
\begin{equation*}
    \operatorname{dist}_{\infty}(x, A)= \inf_{y\in A} \|x-y\|_{\infty}.
\end{equation*}

Let $(X,\operatorname{dist})$ be a metric space. We say that a set $\{a_{n}\}_{n\in\mathcal{N}}\subseteq X$ is $s$-separated if
\begin{equation*}
    \operatorname{dist}(a_{n_{1}},a_{n_{2}})\geqslant s
\end{equation*}
for any distinct $n_{1},n_{2}\in\mathcal{N}$.

The separation property in \eqref{Gintro} provides a discrepancy estimate in the cube.

\begin{lemma}\label{packing}
Assume \eqref{Gintro} holds. Then for every $j$, the points $\{n\alpha\}, 1 \leqslant n \leqslant N_{j}$ are $c_{0}N_{j}^{-1/d}$-separated in $(\mathbb{T}^{d}, \|\cdot\|_{\mathbb{T}^{d},\infty})$. Consequently, there exists a constant $C = C(d,c_{0})$ such that for every axis-parallel cube $Q \subseteq \mathbb{T}^{d}$ of side length $\rho$,
\begin{equation}\label{packingbound}
    \#\big\{ 1 \leqslant n \leqslant N_{j} : \{n\alpha\} \in Q \big\} \leqslant C (1 + \rho^{d} N_{j}).
\end{equation}
\end{lemma}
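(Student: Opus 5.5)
The proof splits into two parts: the separation claim, which comes directly from \eqref{Gintro}, and the counting bound, which follows from a volume (packing) argument.

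\emph{Separation.} Take distinct $1\leqslant n_1<n_2\leqslant N_j$. Since the torus distance is translation invariant and $\{n\alpha\}\equiv n\alpha \pmod{\mathbb{Z}^d}$,
\begin{equation*}
\big\|\{n_2\alpha\}-\{n_1\alpha\}\big\|_{\mathbb{T}^d,\infty}=\|(n_2-n_1)\alpha\|_{\mathbb{T}^d,\infty}.
\end{equation*}
The difference $n=n_2-n_1$ satisfies $1\leqslant n\leqslant N_j-1$. By \eqref{Gintro}, the right-hand side is therefore at least $c_0N_j^{-1/d}$.

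\emph{Counting.} Set $s=c_0N_j^{-1/d}$. Let $P$ be the set of $n\leqslant N_j$ with $\{n\alpha\}\in Q$, where $Q$ has side length $\rho$.

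If $\rho\geqslant 1$, then $\#P\leqslant N_j\leqslant \rho^dN_j$ trivially. So assume $\rho<1$, and view $Q$ in the torus. Around each point $\{n\alpha\}$ with $n\in P$, place the open torus cube of sup-radius $s/2$, i.e.\ side length $\min(s,1)$. Separation makes these cubes pairwise disjoint. Each of them lies inside the concentric enlargement of $Q$ of side length $\rho+s$.

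If $\rho+s<1$, the enlargement is an honest cube in $\mathbb{T}^d$ of volume $(\rho+s)^d$. If $\rho+s\geqslant 1$, we simply compare with the total volume $1\leqslant(\rho+s)^d$. Note also that $s\leqslant c_0$. In both cases,
\begin{equation*}
\#P\cdot \min(s,1)^d\leqslant (\rho+s)^d\leqslant 2^{d-1}(\rho^d+s^d).
\end{equation*}

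Now divide by $\min(s,1)^d$, which is $\geqslant \min(c_0,1)^dN_j^{-1}$. This gives
\begin{equation*}
\#P\leqslant \frac{2^{d-1}}{\min(c_0,1)^d}\,\big(\rho^dN_j+c_0^d\big),
\end{equation*}
which is \eqref{packingbound} with $C=2^{d-1}\max(1,c_0^d)/\min(c_0,1)^d$.

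The only step needing care is the wraparound in the torus: cubes of radius $\geqslant 1/2$ are not embedded. The case split above ($\rho\geqslant1$; $\rho+s\geqslant1$; and the cap $\min(s,1)$ on the small cubes) handles this. Otherwise the argument is a standard disjoint-balls volume count.
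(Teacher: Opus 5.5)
Your proof is correct and follows the same route as the paper. You get separation from \eqref{Gintro} via $\|\{n_2\alpha\}-\{n_1\alpha\}\|_{\mathbb{T}^d,\infty}=\|(n_2-n_1)\alpha\|_{\mathbb{T}^d,\infty}$, then a disjoint-cubes volume count, which the paper states in one line as $C_d(1+\rho/\Delta)^d$. Your case analysis for wraparound is more than needed: \eqref{Gintro} already forces $s\leqslant 1/2$, and projecting to the torus never increases volume. It is harmless, though, and has the benefit of making the constant explicit.
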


\begin{proof}
If $1 \leqslant n_{1} < n_{2} \leqslant N_{j}$, then by \eqref{Gintro},
\begin{equation*}
    \|\{n_{1}\alpha\} - \{n_{2}\alpha\}\|_{\mathbb{T}^{d},\infty} = \|(n_{2} - n_{1})\alpha\|_{\mathbb{T}^{d},\infty} \geqslant c_{0} N_{j}^{-1/d},
\end{equation*}
which implies that they are $c_{0}N_{j}^{-1/d}$-separated.

Let $\Delta = c_{0} N_{j}^{-1/d}$. A cube of side length $\rho$ contains at most
\begin{equation*}
    C_{d} \bigg(1 + \frac{\rho}{\Delta}\bigg)^{d} \leqslant C(d,c_{0}) (1 + \rho^{d} N_{j})
\end{equation*}
points. This gives \eqref{packingbound}.
\end{proof}

We shall also use the following second moment Borel--Cantelli Lemma. For completeness, we provide a proof in Appendix \ref{ABC}.

\begin{lemma}[\cites{MR45327,MR161355}]\label{SMBC}
Let $(X,\mu)$ be a probability space, and let $F_{1},F_{2},\ldots$ be measurable subsets of $X$. Assume that
\begin{equation*}
    S_{L}\coloneq \sum_{\ell=1}^{L}\mu(F_{\ell})\to\infty\qquad \text{as }L\to\infty,
\end{equation*}
and
\begin{equation}\label{SMBCcondition}
    \sum_{a,\ell=1}^{L}\mu(F_{a}\cap F_{\ell})=(1+o(1))S_{L}^{2}\qquad \text{as }L\to\infty.
\end{equation}
Then
\begin{equation*}
    \mu\bigg(\limsup_{\ell\to\infty}F_{\ell}\bigg)=1.
\end{equation*}
\end{lemma}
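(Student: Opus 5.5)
The statement to prove is Lemma~\ref{SMBC}, the second moment Borel--Cantelli lemma. I would prove it by combining the Chung--Erd\H{o}s (Paley--Zygmund) inequality with a tail argument.

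For fixed $m\geqslant 1$ and $L>m$, set
\begin{equation*}
Z_{m,L}=\sum_{\ell=m+1}^{L}\mathbf{1}_{F_\ell}.
\end{equation*}
The event $\bigcup_{\ell=m+1}^{L}F_\ell$ is exactly $\{Z_{m,L}>0\}$. Cauchy--Schwarz, applied to $Z_{m,L}=Z_{m,L}\mathbf{1}_{\{Z_{m,L}>0\}}$, gives
\begin{equation*}
\mu\bigg(\bigcup_{\ell=m+1}^{L}F_\ell\bigg)\geqslant \frac{\big(\int Z_{m,L}\,d\mu\big)^2}{\int Z_{m,L}^2\,d\mu}
=\frac{(S_L-S_m)^2}{\sum_{a,\ell=m+1}^{L}\mu(F_a\cap F_\ell)}.
\end{equation*}

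Next I bound the denominator. All terms are nonnegative, so
\begin{equation*}
\sum_{a,\ell=m+1}^{L}\mu(F_a\cap F_\ell)\leqslant \sum_{a,\ell=1}^{L}\mu(F_a\cap F_\ell)=(1+o(1))S_L^2
\end{equation*}
by \eqref{SMBCcondition}. For the numerator, $m$ is fixed while $S_L\to\infty$, so $S_m$ is a constant and $(S_L-S_m)^2=(1+o(1))S_L^2$. Letting $L\to\infty$ gives
\begin{equation*}
\mu\bigg(\bigcup_{\ell>m}F_\ell\bigg)\geqslant \limsup_{L\to\infty}\frac{(S_L-S_m)^2}{(1+o(1))S_L^2}=1.
\end{equation*}
Here the union over $\ell>m$ is the increasing limit of the finite unions, so continuity from below applies. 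Hence $\mu\big(\bigcup_{\ell>m}F_\ell\big)=1$ for every $m$.

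Finally,
\begin{equation*}
\limsup_{\ell\to\infty}F_\ell=\bigcap_{m}\bigcup_{\ell>m}F_\ell
\end{equation*}
is a decreasing intersection of sets of full measure. Since $\mu$ is a probability measure, continuity from above gives $\mu(\limsup F_\ell)=1$.

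The only point needing care is the denominator step. One might worry that discarding the first $m$ indices breaks the asymptotic in \eqref{SMBCcondition}. It does not, because we only need an upper bound, and that follows from monotonicity of nonnegative sums. The divergence $S_L\to\infty$ is used exactly once, to absorb the fixed constant $S_m$ in the numerator.
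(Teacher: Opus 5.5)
Your proof is correct and follows essentially the same route as the paper's appendix proof: Cauchy--Schwarz applied to the tail counting function $Z$ restricted to $\{Z>0\}$, then $L\to\infty$, then intersecting over the starting index. The only difference is that the paper establishes the two-sided asymptotic $I_{K,L}=(1+o(1))S_{K,L}^{2}$, while you use just the upper bound on the tail second moment coming from monotonicity of nonnegative sums. That one-sided bound is all the argument needs, so your version is a slight streamlining.
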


\section{Primitive discrepancy estimates}
\label{sec:discrepancy}
Fix $b = (b_{1}, \ldots, b_{d}) \in \mathbb{Z}^{d}$. Define the primitive set
\begin{equation*}
    \mathcal{P}_{b} = \{n \geqslant 1 : \gcd\big(n, \lfloor n\alpha_{1}\rfloor - b_{1}, \ldots, \lfloor n\alpha_{d}\rfloor - b_{d}\big) = 1 \}.
\end{equation*}
A bounded set $A \subseteq [0,1)^{d}$ is called Jordan measurable if its boundary $\partial A$ has Lebesgue measure zero.

\begin{theorem}\label{counting}
    Let $\alpha \in \Lambda^{d}$ with $d \geqslant 2$. Let $A \subseteq [0,1)^{d}$ be Jordan measurable. Then, along the good scales $N_{j}$,
\begin{equation}\label{countingformula}
    \#\big\{ 1 \leqslant n \leqslant N_{j} : n \in \mathcal{P}_{b}, \ \{n\alpha\} \in A \big\} = \big(\delta_{d}\operatorname{Leb}(A) + o(1)\big)N_{j},
\end{equation}
where
\begin{equation}\label{deltad}
    \delta_{d} = \prod_{p \ \text{is prime}} \bigg(1 - \frac{1}{p^{d+1}}\bigg) = \frac{1}{\zeta(d+1)}.
\end{equation}
\end{theorem}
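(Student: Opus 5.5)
We sieve the primitivity condition with M\"obius inversion. Writing $m_i(n)=\lfloor n\alpha_i\rfloor-b_i$, we have
\begin{equation*}
    \mathbf{1}_{\mathcal{P}_b}(n)=\sum_{q\mid \gcd(n,m_1(n),\ldots,m_d(n))}\mu(q).
\end{equation*}
So the count in \eqref{countingformula} equals $\sum_{q\geqslant1}\mu(q)\,T_q(N_j)$, where
\begin{equation*}
    T_q(N)=\#\{1\leqslant n\leqslant N:\ q\mid n,\ q\mid m_i(n)\ \forall i,\ \{n\alpha\}\in A\}.
\end{equation*}
Only $q\leqslant N_j$ contribute. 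We split at a large fixed parameter $z$ into $q\leqslant z$ and $q>z$. Since $A$ is Jordan measurable, it can be sandwiched between finite unions of boxes of measure within $\varepsilon$. It therefore suffices to treat $A$ a box, with the final error handled by taking $\varepsilon\to0$.

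\emph{Small $q$.} Write $n=qk$. The condition $q\mid \lfloor qk\alpha_i\rfloor-b_i$ means that $\lfloor qk\alpha_i\rfloor\equiv b_i \pmod q$. Equivalently, $qk\alpha_i/q=k\alpha_i$ lies, modulo $1$, in the set $\bigcup_{r}[(b_i+rq)/q,(b_i+rq+1)/q)$, which reduces to a single interval $I_i$ of length $1/q$ in $\mathbb{T}$ modulo $1$. The conditions are thus
\begin{equation*}
    \{k\alpha\}\in \textstyle\prod_i I_i,\qquad \{qk\alpha\}\in A.
\end{equation*}
In other words, $\{k\alpha\}$ lies in $I\cap \phi_q^{-1}(A)$, where $\phi_q(x)=qx \bmod 1$. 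Because $1,\alpha_1,\ldots,\alpha_d$ are $\mathbb{Q}$-linearly independent, Weyl equidistribution of $\{k\alpha\}$ applies to this Jordan measurable set. The set has measure $q^{-d}\operatorname{Leb}(A)$, since $\phi_q$ maps the box $I$ of side $1/q$ bijectively onto $\mathbb{T}^d$ with Jacobian $q^d$. Hence
\begin{equation*}
    T_q(N)=\Big(\frac{\operatorname{Leb}(A)}{q^{d+1}}+o_q(1)\Big)N.
\end{equation*}
Summing over $q\leqslant z$ gives $\big(\operatorname{Leb}(A)\sum_{q\leqslant z}\mu(q)q^{-d-1}+o_z(1)\big)N$. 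As $z\to\infty$, the coefficient tends to $\delta_d\operatorname{Leb}(A)$, with a tail of $O(z^{-d})$. Note that this step uses only equidistribution, not the good scales.

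\emph{Large $q$: the main obstacle.} We need
\begin{equation*}
    \sum_{z<q\leqslant N_j}T_q(N_j)\leqslant \eta(z)N_j+o(N_j)\qquad\text{with }\eta(z)\to0.
\end{equation*}
It suffices to bound by prime divisors, using $T_q\leqslant T_p$ for some prime $p\mid q$. More cleanly, the non-primitive $n$ counted with some $q>z$ are covered by $\bigcup_{p>z'}\{n: p\mid\gcd\}$ together with the squarefree-with-all-prime-factors-$\leqslant z'$ range. I would simply run the sieve over primes: count $n\leqslant N$ divisible by no prime $p\leqslant z$ in the gcd via M\"obius over $P(z)=\prod_{p\leqslant z}p$, which is exact and finite. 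Then subtract the $n$ for which some prime $p>z$ divides the gcd, bounded by $\sum_{z<p\leqslant N_j}T_p(N_j)$.

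For each prime $p$, $T_p(N_j)$ counts $k\leqslant N_j/p$ with $\{pk\alpha\}=\{n\alpha\}$ and $n=pk\leqslant N_j$. The orbit points are all $c_0N_j^{-1/d}$-separated (Lemma~\ref{packing}), and the constraint says $\{k\alpha\}\in I$, a cube of side $1/p$. Applying \eqref{packingbound} to the points $\{k\alpha\}$, $k\leqslant N_j/p\leqslant N_j$, gives $T_p\leqslant C(1+p^{-d}N_j)$. That crude bound is insufficient for $p$ near $N_j$, since the $1$-terms sum to about $\pi(N_j)$. A better device is the following: when $p\mid n$ and $p\mid m(n)$, the point $n\alpha-m(n)-b$ lies within $O(1)$ of $p\mathbb{Z}^{d+1}$. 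Dividing, the point $k\alpha-m/p$ satisfies $\|k\alpha - \ell\|_\infty\lesssim 1/p$ for the integer vector $\ell=(m+b)/p$. Since the distinct $n=pk$ give separated $n\alpha$, I would try to bound, uniformly in $b$, the number of pairs $(p,k)$ with $pk\leqslant N_j$ by the separation count. The points $\{n\alpha\}$ landing in the fixed cube of side $\sim 1/p$ around $-b/p$ number $\leqslant C(1+N_jp^{-d})$. For $d\geqslant2$ the main terms sum to $N_j\sum_{p>z}p^{-d}=O(N_j/z)$. The remaining $\sum_{p\leqslant N_j}1$ must be removed. For $p>N_j^{1/d}/c_0'$ the cube is smaller than the separation scale, so at most one $n$ contributes for each $p$. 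I would then exploit that such $n$ also needs $p\mid n$, restricting $p$ to divisors: the total is bounded by the number of $(n,p)$ with $p\mid n$ and $\{n\alpha\}$ near $-b/p$. Equivalently, $(n/p)\alpha$ is within $O(1/p)$ of an integer, with $n/p\leqslant N_j/p<N_j^{1-1/d}$. Counting these $k$ via \eqref{Gintro} at scale $N_j$ (namely $\|k\alpha\|\geqslant c_0N_j^{-1/d}$ forces $1/p\gtrsim N_j^{-1/d}$) makes that range empty up to constants. This justifies the split, and I expect the careful handling of this range to be where most of the work lies.
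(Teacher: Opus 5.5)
Your proof is correct and has the same skeleton as the paper's. It uses an exact M\"obius sieve over the divisors of $P(z)=\prod_{p\leqslant z}p$. For each fixed $q\mid P(z)$, Weyl equidistribution of $\{k\alpha\}$ is applied on $I\cap\phi_q^{-1}(A)$, a set of measure $q^{-d}\operatorname{Leb}(A)$. For primes $p>z$, Lemma~\ref{packing} gives $T_p(N_j)\leqslant C(1+N_jp^{-d})$.

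You diverge from the paper only in handling the $1$-terms, and your stated reason is wrong. Since $\sum_{z<p\leqslant N_j}1\leqslant\pi(N_j)=o(N_j)$, the crude bound already suffices; the paper finishes exactly this way, using $\pi(N_j)/N_j\to0$.

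Your replacement argument is still valid, and slightly sharper. If $n=pk$ and $p\mid\lfloor n\alpha\rfloor-b$, then $k\alpha-(\lfloor n\alpha\rfloor-b)/p=(\{n\alpha\}+b)/p$. Hence $\|k\alpha\|_{\mathbb{T}^d,\infty}\leqslant(1+\|b\|_\infty)/p$ with $1\leqslant k\leqslant N_j$, and \eqref{Gintro} forces $p\leqslant(1+\|b\|_\infty)c_0^{-1}N_j^{1/d}$. So only $O_b(N_j^{1/d})$ primes contribute at all. This bounds the large-prime error by $O_b(N_j^{1/d})+O(N_jz^{1-d})$ with no input on the distribution of primes.

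The same computation works verbatim for composite $q$. It therefore also rescues your first formulation, which splits the full series $\sum_q\mu(q)T_q$ at $q=z$. There the crude bound genuinely fails, because $\sum_{z<q\leqslant N_j}1\sim N_j$.

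Some notational slips to fix:
\begin{itemize}
\item The relevant cube is $\prod_i[c_i/p,(c_i+1)/p)$ with $c_i\equiv b_i \pmod p$, so it lies near $b/p$, not $-b/p$.
\item The points being counted are $\{k\alpha\}$, not $\{n\alpha\}$.
\item The integer vector is $(\lfloor n\alpha\rfloor-b)/p$, not $(m+b)/p$.
\end{itemize}
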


\begin{remark}
    \hyperref[counting]{Theorem~\ref{counting}} is a higher-dimensional analogue of  \cite{MR3996322}*{Theorem 2.1} in dimension one.
\end{remark}

\begin{proof}
Let $r \geqslant 1$ be square-free. Define
\begin{equation*}
    G_{b}(n) = \gcd(n, \lfloor n\alpha_{1}\rfloor - b_{1}, \ldots, \lfloor n\alpha_{d}\rfloor - b_{d}).
\end{equation*}
Define
\begin{equation}\label{defDr}
    D_{r}(A,N) = \#\big\{ 1 \leqslant n \leqslant N : r \mid G_{b}(n), \ \{n\alpha\} \in A \big\}.
\end{equation}
We first compute $D_{r}(A,N)$ for fixed $r$.

Write $n = r\ell$. For each $1 \leqslant i \leqslant d$, let $c_{i} \in \{0, \ldots, r-1\}$ be such that
\begin{equation*}
    b_{i}\equiv c_{i}\pmod r.
\end{equation*}
Since
\begin{equation*}
    \ell \alpha_{i} = \lfloor \ell \alpha_{i}\rfloor + \{\ell \alpha_{i}\},
\end{equation*}
multiplying by $r$ and then taking the integer part on both sides gives
\begin{equation*}
    \lfloor r\ell\alpha_{i}\rfloor = r\lfloor \ell\alpha_{i}\rfloor + \lfloor r\{\ell\alpha_{i}\}\rfloor,
\end{equation*}
which means
\begin{equation*}
    \lfloor r\ell\alpha_{i}\rfloor \equiv \lfloor r\{\ell\alpha_{i}\}\rfloor \pmod{r}.
\end{equation*}
Thus the condition $r \mid \lfloor r\ell\alpha_{i}\rfloor - b_{i}$ is equivalent to $\lfloor r\{\ell\alpha_{i}\}\rfloor = c_{i}$, which is further equivalent to
\begin{equation*}
    \{\ell\alpha_{i}\} \in \bigg[\frac{c_{i}}{r}, \frac{c_{i}+1}{r}\bigg).
\end{equation*}
Under the condition $\lfloor r\{\ell\alpha_{i}\}\rfloor = c_{i}$, one has
\begin{equation}\label{congruence}
    \begin{split}
        \{r\ell\alpha_{i}\} &= \{r\lfloor\ell \alpha_{i} \rfloor + r \{\ell \alpha_{i}\}\} \\
        &= \{r \{\ell \alpha_{i}\}\} \\
        &= r\{\ell \alpha_{i}\} - \lfloor r\{\ell\alpha_{i}\}\rfloor \\
        &= r\{\ell\alpha_{i}\} - c_{i}.
    \end{split}
\end{equation}

Let $Q_{r,c}$ be the cube
\begin{equation*}
    Q_{r,c}\coloneq \prod_{i=1}^{d}\bigg[\frac{c_i}{r},\frac{c_i+1}{r}\bigg).
\end{equation*}
Define 
\begin{equation*}
    A_{r,b}\coloneq \{y\in Q_{r,c}:(ry_{1}-c_{1}, \ldots, ry_{d}-c_{d}) \in A \}.
\end{equation*}
Then $A_{r,b}$ is Jordan measurable, and
\begin{equation}\label{Armeasure}
    \operatorname{Leb}(A_{r,b}) = \frac{\operatorname{Leb}(A)}{r^{d}}.
\end{equation}
Moreover, by \eqref{congruence} and \eqref{defDr},
\begin{equation*}
    D_{r}(A,N) = \#\big\{ 1 \leqslant \ell \leqslant \lfloor N/r \rfloor : \{\ell\alpha\} \in A_{r,b} \big\}.
\end{equation*}
Since $1, \alpha_{1}, \ldots, \alpha_{d}$ are linearly independent over $\mathbb{Q}$, the sequence $\{\ell\alpha\}$ is uniformly distributed in $\mathbb{T}^{d}$. Therefore, by Weyl's equidistribution theorem and \eqref{Armeasure}, for fixed $r$,
\begin{equation}\label{Drasymp}
    D_{r}(A,N) = \frac{N}{r} \cdot \operatorname{Leb}(A_{r,b}) + o(N) = \frac{\operatorname{Leb}(A)}{r^{d+1}}N + o(N).
\end{equation}

Let
\begin{equation}\label{PRdef}
    P_{R} = \prod_{\substack{p \leqslant R \\ p \text{ is prime}}} p.
\end{equation}
The number of integers $1 \leqslant n \leqslant N$ such that $\{n\alpha\} \in A$ and no prime $p \leqslant R$ divides all of $n, \lfloor n\alpha_{1}\rfloor - b_{1}, \ldots, \lfloor n\alpha_{d}\rfloor - b_{d}$ is given by
\begin{equation*}
    \begin{split}
        S_{R}(A,N) &=\# \{1\leqslant n\leqslant N: \{n\alpha\}\in A, \gcd(G_{b}(n), P_{R})=1\}\\
        &=\sum_{\substack{1\leqslant n\leqslant N\\ \{n\alpha\}\in A}}\sum_{\substack{r\mid P_{R}\\ r\mid G_{b}(n)}} \mu(r),
    \end{split}
\end{equation*}
where $\mu(\cdot)$ is the M\"{o}bius function,
\begin{equation*}
    \mu(r) = 
    \begin{cases}
        1,& \text{ if } r = 1,\\
        (-1)^{k},& \text{ if } r \text{ is the product of } k \text{ distinct primes},\\
        0,& \text{ if } r \text{ is divisible by the square of a prime},
    \end{cases}
\end{equation*}
and we use the property
\begin{equation*}
    \sum_{\substack{r\mid P_{R}\\ r\mid G_{b}(n)}} \mu(r)=\begin{cases}
        1,& \text{ if } \gcd(G_{b}(n), P_{R})=1,\\
        0,&\text{ if } \gcd(G_{b}(n), P_{R})>1.
    \end{cases}
\end{equation*}
By \eqref{defDr}, we can rewrite
\begin{equation}\label{IEP}
    S_{R}(A,N) =\sum_{r \mid P_{R}} \mu(r)D_{r}(A,N).
\end{equation}
Using \eqref{Drasymp} and \eqref{IEP}, for fixed $R$,
\begin{equation*}
    \begin{split}
        S_{R}(A,N)&=\sum_{r\mid P_{R}}\mu(r) \bigg(\frac{\operatorname{Leb}(A)}{r^{d+1}}N + o(N) \bigg)\\
        &=\operatorname{Leb}(A) N \sum_{r\mid P_{R}}\frac{\mu(r)}{r^{d+1}} + o_{R}(N).
    \end{split}
\end{equation*}
Here the notation $o_R(N)$ indicates that the implicit error term depends on $R$. Now we estimate $\sum_{r\mid P_{R}}\frac{\mu(r)}{r^{d+1}}$. Note that $P_{R}$ is square-free according to the definition \eqref{PRdef}, thus every $r\mid P_{R}$ is the product of distinct primes, which means for such $r$,
\begin{equation*}
    \mu(r)=(-1)^{\omega(r)},
\end{equation*}
where $\omega(r)$ is the number of distinct prime factors of $r$. Thus the sum can be expressed as the standard Euler product,
\begin{equation*}
    \sum_{r\mid P_{R}}\frac{\mu(r)}{r^{d+1}}= \prod_{\substack{p \leqslant R \\ p \text{ is prime}}} \bigg(1 - \frac{1}{p^{d+1}}\bigg).
\end{equation*}
Therefore
\begin{equation}\label{SRasymp}
    S_{R}(A,N) = \operatorname{Leb}(A)N \prod_{\substack{p \leqslant R \\ p \text{ is prime}}} \bigg(1 - \frac{1}{p^{d+1}}\bigg) + o_{R}(N).
\end{equation}

It remains to estimate the contribution of large primes. Let $T_{R}(N_{j})$ denote the number of integers $1 \leqslant n \leqslant N_{j}$ for which there exists a prime $p > R$ satisfying
\begin{equation}\label{largeprimecondition}
    p \mid n, \qquad p \mid \lfloor n\alpha_{i}\rfloor - b_{i} \quad \text{for every } 1 \leqslant i \leqslant d.
\end{equation}
We claim that
\begin{equation}\label{TRbound}
    \limsup_{j\to\infty} \frac{T_{R}(N_{j})}{N_{j}} \leqslant C_{\alpha,d}R^{1-d}.
\end{equation}
Indeed, fix a prime $p > R$ and write $n = p\ell$. Let $c_{i}'(p) \in \{0, \ldots, p-1\}$ be such that
\begin{equation*}
    b_{i}\equiv c_{i}'(p)\pmod p.
\end{equation*}
From
\begin{equation*}
    \lfloor p\ell\alpha_{i}\rfloor = p\lfloor \ell\alpha_{i}\rfloor + \lfloor p\{\ell\alpha_{i}\}\rfloor,
\end{equation*}
condition \eqref{largeprimecondition} implies
\begin{equation*}
    \{\ell\alpha_{i}\} \in \bigg[\frac{c_{i}'(p)}{p}, \frac{c_{i}'(p)+1}{p}\bigg) \quad \text{for every}\ 1 \leqslant i \leqslant d.
\end{equation*}
Thus $\{\ell\alpha\}$ lies in an axis-parallel cube of side length $p^{-1}$. Since
\begin{equation*}
    1 \leqslant \ell \leqslant \frac{N_{j}}{p} \leqslant N_{j},
\end{equation*}
\hyperref[packing]{Lemma~\ref{packing}} gives
\begin{equation}\label{perpbad}
    \#\bigg\{ 1 \leqslant \ell \leqslant \frac{N_{j}}{p} : p \mid \lfloor p\ell\alpha_{i}\rfloor - b_{i} \text{ for every } i \bigg\} \leqslant C_{\alpha,d}\bigg(1 + \frac{N_{j}}{p^{d}}\bigg).
\end{equation}
By summing \eqref{perpbad} over primes $R<p \leqslant N_{j}$, we get
\begin{equation*}
    T_{R}(N_{j}) \leqslant C_{\alpha,d} \sum_{\substack{R<p\leqslant N_{j} \\ p\text{ is prime}}} \bigg(1 + \frac{N_{j}}{p^{d}}\bigg).
\end{equation*}
Therefore,
\begin{equation*}
    \frac{T_{R}(N_{j})}{N_{j}} \leqslant C_{\alpha,d}\frac{\pi(N_{j})}{N_{j}} + C_{\alpha,d} \sum_{\substack{p>R\\ p\ \text{is prime}}} \frac{1}{p^{d}},
\end{equation*}
where $\pi(\cdot)$ is the prime-counting function,
\begin{equation*}
    \pi(N)=\#\{1<p\leqslant N: p\ \text{is prime}\}.
\end{equation*}
Since $d \geqslant 2$,
\begin{equation*}
    \sum_{\substack{p>R\\ p\ \text{is prime}}} \frac{1}{p^{d}} \leqslant \sum_{m>R} \frac{1}{m^{d}} \leqslant C_{d} R^{1-d}.
\end{equation*}
Also, it is well-known that $\pi(N_{j})/N_{j} \to 0$ as $j\to\infty$. Hence, \eqref{TRbound} follows.

Let
\begin{equation*}
    P_{b}(A,N) = \#\big\{ 1 \leqslant n \leqslant N :  \{n\alpha\} \in A, \ G_{b}(n)=1 \big\}.
\end{equation*}
Every primitive $n$ is counted by $S_{R}(A,N)$. Conversely, a non-primitive $n$ counted by $S_{R}(A,N)$ must have a common prime divisor $p > R$ of $n, \lfloor n\alpha_{1}\rfloor - b_{1}, \ldots, \lfloor n\alpha_{d}\rfloor - b_{d}$.
Thus,
\begin{equation}\label{diffbound}
    0 \leqslant S_{R}(A,N_{j}) - P_{b}(A,N_{j}) \leqslant T_{R}(N_{j}).
\end{equation}
Combining \eqref{SRasymp}, \eqref{TRbound}, and \eqref{diffbound}, we obtain
\begin{equation*}
    \limsup_{j\to\infty} \bigg| \frac{P_{b}(A,N_{j})}{N_{j}} - \operatorname{Leb}(A) \prod_{\substack{p \leqslant R \\ p \text{ is prime}}} \bigg(1 - \frac{1}{p^{d+1}}\bigg) \bigg| \leqslant C_{\alpha,d}R^{1-d}.
\end{equation*}
Letting $R \to \infty$ gives \eqref{countingformula}.
\end{proof}

\section{Shrinking targets along good scales}
\label{sec:targets}
Let $\alpha\in \Lambda^{d}$.
Fix $b \in \mathbb{Z}^{d}$. Let $0 < \tau < c_{0}/8$. For each good scale $N_{j}$, define
\begin{equation}\label{rjdef}
    r_{j} = \tau N_{j}^{-1/d}.
\end{equation}
For $x \in [0,1)^{d}$, define the box
\begin{equation*}
    B_{j}(x) = [0,1)^{d} \cap \prod_{i=1}^{d}(x_{i} - r_{j}, x_{i} + r_{j}).
\end{equation*}

Define
\begin{equation}\label{Ejdef}
    E_{j}^{b}(\tau) = \bigcup_{\substack{1 \leqslant n \leqslant N_{j}\\ n \in \mathcal{P}_{b}}} B_{j}(\{n\alpha\}) \subseteq [0,1)^{d}.
\end{equation}
We call $E_{j}^{b}(\tau)$ the ``targets'' following the terminology used in \cite{MR4834219}. 
\hyperref[fig:targets2d]{Figure~\ref{fig:targets2d}} illustrates an interior target, a target intersecting one side of the boundary, and a target intersecting two sides near a corner.

\begin{figure}[htbp]
\centering
\begin{tikzpicture}[line cap=round,line join=round]

\draw[thick] (0cm,0cm) rectangle (4cm,4cm);

\node[below left] at (0cm,0cm) {$0$};
\node[below] at (4cm,0cm) {$1$};
\node[left] at (0cm,4cm) {$1$};
\node[below] at (2cm,0cm) {$x_{1}$};
\node[left] at (0cm,2cm) {$x_{2}$};
\node[above] at (2cm,4cm) {$[0,1)^{2}$};




\fill[blue!18] (0.75cm,1.80cm) rectangle (2.25cm,3.30cm);
\draw[blue!70!black,thick] (0.75cm,1.80cm) rectangle (2.25cm,3.30cm);
\fill[black] (1.50cm,2.55cm) circle (1.8pt);
\node[below] at (1.50cm,2.55cm) {$\{n_{1}\alpha\}$};

\draw[<->] (0.75cm,1.55cm) -- (2.25cm,1.55cm);
\node[below] at (1.50cm,1.55cm) {\small $2r_{j}$};

\fill[red!18] (2.70cm,0.85cm) rectangle (4.00cm,2.35cm);
\draw[red!70!black,thick] (2.70cm,0.85cm) -- (4.00cm,0.85cm) -- (4.00cm,2.35cm) -- (2.70cm,2.35cm) -- cycle;
\fill[black] (3.45cm,1.60cm) circle (1.8pt);
\node[below] at (3.45cm,1.60cm) {$\{n_{2}\alpha\}$};

\fill[green!22] (2.70cm,2.70cm) rectangle (4.00cm,4.00cm);
\draw[green!50!black,thick] (2.70cm,2.70cm) -- (4.00cm,2.70cm) -- (4.00cm,4.00cm) -- (2.70cm,4.00cm) -- cycle;
\fill[black] (3.45cm,3.45cm) circle (1.8pt);
\node[below] at (3.45cm,3.45cm) {$\{n_{3}\alpha\}$};

\end{tikzpicture}
\caption{Diagram of the shrinking targets $E_{j}^{b}(\tau)$ in the case $d=2$. The black dots represent orbit points $\{n\alpha\}$ in $[0,1)^{2}$. Around each primitive orbit point we place the box $B_{j}(\{n\alpha\})$. }
\label{fig:targets2d}
\end{figure}
We first estimate the size of the targets.
\begin{lemma}\label{targetsize}
Fix $b \in \mathbb{Z}^{d}$ and $0 < \tau < c_{0}/8$. Then
\begin{equation*}
    \operatorname{Leb}(E_{j}^{b}(\tau)) = \delta_{d}(2\tau)^{d} + o(1), \quad j \to \infty,
\end{equation*}
where $\delta_{d}$ is as in \eqref{deltad}.
\end{lemma}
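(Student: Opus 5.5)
The plan is to use the separation from Lemma~\ref{packing} to make the boxes $B_{j}(\{n\alpha\})$ pairwise disjoint, which turns the measure of the union into a sum. Then the boxes lying entirely inside $[0,1)^d$ can be counted with Theorem~\ref{counting}. Since $\tau<c_{0}/8$, for distinct $1\leqslant n_1<n_2\leqslant N_j$ we have $\|\{n_1\alpha\}-\{n_2\alpha\}\|_{\mathbb{T}^d,\infty}\geqslant c_0N_j^{-1/d}>2r_j$. The boxes $B_j(x)$ are truncated open cubes of half-side $r_j$ in $[0,1)^d$ with the Euclidean (non-toral) sup-distance. That distance dominates the toral one, so any two centers are at sup-distance $>2r_j$, and the boxes around distinct orbit points are pairwise disjoint. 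Hence
\begin{equation*}
    \operatorname{Leb}(E_j^b(\tau))=\sum_{\substack{1\leqslant n\leqslant N_j\\ n\in\mathcal{P}_b}}\operatorname{Leb}\big(B_j(\{n\alpha\})\big).
\end{equation*}

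Next I split the sum according to whether the center is far from the boundary. Fix a small $\eta>0$ and let $A_\eta=[\eta,1-\eta)^d$, which is Jordan measurable. For $j$ large we have $r_j<\eta$, so every $n$ with $\{n\alpha\}\in A_\eta$ contributes exactly $(2r_j)^d=(2\tau)^dN_j^{-1}$. Every other $n$ contributes at most $(2r_j)^d$, and at least $0$. Applying Theorem~\ref{counting} to $A_\eta$ gives
\begin{equation*}
    \#\{n\leqslant N_j: n\in\mathcal{P}_b,\ \{n\alpha\}\in A_\eta\}=(\delta_d(1-2\eta)^d+o(1))N_j.
\end{equation*}
Applying it to $A=[0,1)^d$ gives the total count $(\delta_d+o(1))N_j$. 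Multiplying both counts by $(2\tau)^dN_j^{-1}$ gives
\begin{equation*}
    \delta_d(2\tau)^d(1-2\eta)^d+o(1)\leqslant \operatorname{Leb}(E_j^b(\tau))\leqslant \delta_d(2\tau)^d+o(1).
\end{equation*}
Letting $j\to\infty$ and then $\eta\to0$ yields the claim. Alternatively, one can avoid $\eta$ by using $\eta=r_j$ with Lemma~\ref{packing} directly. The boundary layer of width $r_j$ is covered by $O(r_j^{-(d-1)})$ cubes of side $r_j$, each containing $O(1)$ points since $r_j^dN_j=\tau^d$. So the number of boundary points is $O(N_j^{(d-1)/d})=o(N_j)$, and each contributes at most $(2r_j)^d=O(N_j^{-1})$.

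I expect no serious obstacle here. The only point needing care is the disjointness step, namely checking that the boxes, which are defined in $[0,1)^d$ without wraparound, are disjoint using only the toral separation. This holds because the Euclidean sup-distance is at least the toral one. Truncation at the boundary only reduces the measure, which is handled by the $\eta$-argument above.
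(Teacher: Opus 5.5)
Your proof is correct and follows the paper's argument. It gets disjointness of the boxes from the separation in Lemma~\ref{packing}, counts all primitive centres via Theorem~\ref{counting} with $A=[0,1)^d$, and adds a boundary correction. The paper's boundary correction is exactly your ``alternative'': cover the $r_j$-layer by $O(r_j^{-(d-1)})$ cubes of side $r_j$, giving $O(N_j^{(d-1)/d})=o(N_j)$ boundary centres. Your main $\eta$-sandwich instead applies Theorem~\ref{counting} a second time, to $[\eta,1-\eta)^d$, which is equally valid but gives no rate.
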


\begin{proof}
By \hyperref[packing]{Lemma~\ref{packing}}, the centers $\{n\alpha\}$, $1 \leqslant n \leqslant N_{j}$, are $c_{0}N_{j}^{-1/d}$-separated. Since
\begin{equation*}
    2r_{j} = 2\tau N_{j}^{-1/d} < \frac{c_{0}}{4}N_{j}^{-1/d},
\end{equation*}
the boxes $B_{j}(\{n\alpha\})$ are pairwise disjoint for $1\leqslant n\leqslant N_{j}$.

The only boxes whose volume may be smaller than $(2r_{j})^{d}$ are those whose centers are within distance $r_{j}$ of the boundary of $[0,1)^{d}$. The $r_{j}$-neighborhood of $\partial[0,1)^{d}$ can be covered by $O_{d}(r_{j}^{-(d-1)})$ axis-parallel cubes $Q$ of side length $r_{j}$. By \hyperref[packing]{Lemma~\ref{packing}}, 
\begin{equation*}
    \#\{1 \leqslant n \leqslant N_{j}: \{n\alpha\}\in Q\} \leqslant C_{\alpha,d} (1+r_{j}^{d} N_{j}) =O_{\alpha,d,\tau} (1).
\end{equation*}
Hence the number of boundary centers is
\begin{equation}\label{boundarycount}
    O_{\alpha,d,\tau}(r_{j}^{-(d-1)}) = O_{\alpha,d,\tau}(N_{j}^{(d-1)/d}) = o(N_{j}).
\end{equation}
Apply \hyperref[counting]{Theorem~\ref{counting}} with $A = [0,1)^{d}$, 
\begin{equation}\label{interiorcount}
    \#\big\{ 1 \leqslant n \leqslant N_{j} : n \in \mathcal{P}_{b} \big\} = (\delta_{d} + o(1))N_{j}.
\end{equation}
Therefore, combining \eqref{boundarycount} and \eqref{interiorcount} with the definition of $E_{j}^{b}(\tau)$ in \eqref{Ejdef} shows
\begin{equation*}
    \operatorname{Leb}(E_{j}^{b}(\tau)) = (\delta_{d} + o(1))N_{j}(2r_{j})^{d} + o(N_{j}r_{j}^{d}).
\end{equation*}
Since $N_{j}r_{j}^{d} = \tau^{d}$ by \eqref{rjdef}, this gives
\begin{equation*}
    \operatorname{Leb}(E_{j}^{b}(\tau)) = \delta_{d}(2\tau)^{d} + o(1).
\end{equation*}
\end{proof}

The following lemma reveals the quasi-independence between distinct scales, which is a higher-dimensional generalization of \cite{MR3996322}*{Theorem 3.4}.
\begin{lemma}\label{quasi}
Fix $b \in \mathbb{Z}^{d}$, $0 < \tau < c_{0}/8$, and $i \geqslant 1$. Then
\begin{equation}\label{quasiformula}
    \operatorname{Leb}(E_{i}^{b}(\tau) \cap E_{j}^{b}(\tau)) = \operatorname{Leb}(E_{i}^{b}(\tau))\operatorname{Leb}(E_{j}^{b}(\tau)) + o(1), \quad j \to \infty.
\end{equation}
\end{lemma}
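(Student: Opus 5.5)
We fix $i$ and let $j\to\infty$, so that $r_j\to 0$ while $E_i^b(\tau)$ stays fixed. The set $E_i:=E_i^b(\tau)$ is a finite union of boxes, hence Jordan measurable; let $\partial E_i$ denote its boundary, which is a finite union of pieces of hyperplanes. By the disjointness established in the proof of \hyperref[targetsize]{Lemma~\ref{targetsize}}, the boxes $B_j(\{n\alpha\})$, $n\le N_j$, $n\in\mathcal{P}_b$, are pairwise disjoint. Therefore
\begin{equation*}
    \operatorname{Leb}(E_i\cap E_j^b(\tau))=\sum_{\substack{1\leqslant n\leqslant N_j\\ n\in\mathcal{P}_b}}\operatorname{Leb}\big(E_i\cap B_j(\{n\alpha\})\big).
\end{equation*}
We split the sum according to the position of the center $\{n\alpha\}$. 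Centers in $E_i$ at $\|\cdot\|_\infty$-distance at least $r_j$ from $\partial E_i\cup\partial[0,1)^d$ have their box entirely in $E_i$, contributing exactly $(2r_j)^d$. Centers outside $E_i$ at distance at least $r_j$ from $\partial E_i$ contribute $0$. All remaining centers lie in the $r_j$-neighborhood of $\partial E_i\cup\partial[0,1)^d$, and each contributes at most $(2r_j)^d$.

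The first step is to bound the number of such \emph{boundary} centers, following the argument in the proof of \hyperref[targetsize]{Lemma~\ref{targetsize}}. The $r_j$-neighborhood of $\partial E_i$ is covered by $O_{i}(r_j^{-(d-1)})$ cubes of side $r_j$, because $\partial E_i$ consists of $O_i(1)$ pieces of hyperplanes, each of area at most $O(1)$. By \hyperref[packing]{Lemma~\ref{packing}}, each such cube contains $O(1)$ orbit points $\{n\alpha\}$ with $n\le N_j$. Hence the number of boundary centers is $O_i(N_j^{(d-1)/d})=o(N_j)$, and their total contribution is $o(N_j)\cdot(2r_j)^d=o(1)$, since $N_jr_j^d=\tau^d$.

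The second step handles the main term. It equals $(2r_j)^d$ times the number of $n\le N_j$ with $n\in\mathcal{P}_b$ and $\{n\alpha\}\in E_i$, up to the $o(N_j)$ boundary centers already accounted for. Applying \hyperref[counting]{Theorem~\ref{counting}} with the Jordan measurable set $A=E_i$, this count is $(\delta_d\operatorname{Leb}(E_i)+o(1))N_j$. Thus
\begin{equation*}
    \operatorname{Leb}(E_i\cap E_j^b(\tau))=\delta_d(2\tau)^d\operatorname{Leb}(E_i)+o(1).
\end{equation*}
\hyperref[targetsize]{Lemma~\ref{targetsize}} gives $\operatorname{Leb}(E_j^b(\tau))=\delta_d(2\tau)^d+o(1)$, and since $\operatorname{Leb}(E_i)\le 1$ is fixed, this yields \eqref{quasiformula}.

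The main obstacle is the boundary bookkeeping, which has two parts. First, we must check that $E_i$ is Jordan measurable with a boundary coverable by $O(r^{-(d-1)})$ cubes. This holds because it is a finite union of axis-parallel boxes intersected with $[0,1)^d$; one should be careful that boxes near the edges are truncated rather than wrapped around the torus, though truncation only makes them simpler. Second, the uniformity in the packing bound is essential: the constant in \hyperref[packing]{Lemma~\ref{packing}} must not depend on $j$, which it does not, since it depends only on $d$ and $c_0$. Beyond these points, the argument is a direct combination of the two earlier results, and no error rate is needed because $i$ is fixed.
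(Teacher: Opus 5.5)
Your proposal is correct and follows essentially the same route as the paper's proof. Both use the disjointness of the boxes, sort the centers by their position relative to $\partial E_i^b(\tau)\cup\partial[0,1)^d$, show that the near-boundary centers number $o(N_j)$ by covering that neighborhood with $O(r_j^{-(d-1)})$ cubes and applying Lemma~\ref{packing}, and then evaluate the main term with Theorem~\ref{counting} applied to $A=E_i^b(\tau)$, combined with Lemma~\ref{targetsize}.
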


\begin{proof}
Let
\begin{equation*}
    G = E_{i}^{b}(\tau).
\end{equation*}
This is a finite union of axis-parallel boxes, hence it is Jordan measurable.

Let $\partial_{*}G= (\partial G)\cup  \partial[0,1)^{d}$. The $r_{j}$-neighborhood of $\partial_{*}G$ can be covered by $O_{G}(r_{j}^{-(d-1)})$ axis-parallel cubes of side length $r_{j}$. By \hyperref[packing]{Lemma~\ref{packing}}, each such cube contains $O_{\alpha,d,\tau}(1)$ centers $\{n\alpha\}$, $1 \leqslant n \leqslant N_{j}$. Thus, by the same argument as in \eqref{boundarycount}, 
\begin{equation}\label{nearboundary}
    \#\big\{ 1 \leqslant n \leqslant N_{j} : \operatorname{dist}_{\infty}(\{n\alpha\}, \partial_{*}G) < r_{j} \big\} = o(N_{j}).
\end{equation}
If $\{n\alpha\} \in G$ and $\operatorname{dist}_{\infty}(\{n\alpha\}, \partial_{*}G) \geqslant r_{j}$, then
\begin{equation*}
    B_{j}(\{n\alpha\}) \subseteq G.
\end{equation*}
If $\{n\alpha\} \notin G$ and $\operatorname{dist}_{\infty}(\{n\alpha\}, \partial_{*}G) \geqslant r_{j}$, then
\begin{equation*}
    B_{j}(\{n\alpha\}) \cap G = \emptyset.
\end{equation*}
The centers counted in \eqref{nearboundary} contribute $o(N_{j}r_{j}^{d}) = o(1)$ to the total measure. Therefore
\begin{equation}\label{intermediateintersection}
    \operatorname{Leb}(G \cap E_{j}^{b}(\tau)) = (2r_{j})^{d} \#\big\{ 1 \leqslant n \leqslant N_{j} : n \in \mathcal{P}_{b}, \ \{n\alpha\} \in G \big\} + o(1).
\end{equation}
By \hyperref[counting]{Theorem~\ref{counting}}, applied to the Jordan measurable set $G$,
\begin{equation*}
    \#\big\{ 1 \leqslant n \leqslant N_{j} : n \in \mathcal{P}_{b}, \ \{n\alpha\} \in G \big\} = (\delta_{d}\operatorname{Leb}(G) + o(1))N_{j}.
\end{equation*}
Since $r_{j}^{d} = \tau^{d}N_{j}^{-1}$, \eqref{intermediateintersection} gives
\begin{equation*}
    \operatorname{Leb}(G \cap E_{j}^{b}(\tau)) = \delta_{d}(2\tau)^{d}\operatorname{Leb}(G) + o(1).
\end{equation*}
By \hyperref[targetsize]{Lemma~\ref{targetsize}},
\begin{equation*}
    \operatorname{Leb}(E_{j}^{b}(\tau)) = \delta_{d}(2\tau)^{d} + o(1).
\end{equation*}
Since $G = E_{i}^{b}(\tau)$, we obtain \eqref{quasiformula}.
\end{proof}

Finally, we show that $\limsup_{j\to\infty} E_{j}^{b}(\tau)$ has full Lebesgue measure.
\begin{lemma}\label{fulllimsup}
Fix $b \in \mathbb{Z}^{d}$ and $0 < \tau < c_{0}/8$. Then
\begin{equation}\label{fulllimsupformula}
    \operatorname{Leb}\bigg( \limsup_{j\to\infty} E_{j}^{b}(\tau) \bigg) = 1.
\end{equation}
\end{lemma}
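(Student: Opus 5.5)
The plan is to apply the second-moment Borel--Cantelli lemma (Lemma~\ref{SMBC}) to a suitably chosen subsequence of the targets $E_{j}^{b}(\tau)$. The issue is that Lemma~\ref{quasi} only gives quasi-independence for a fixed $i$ as $j\to\infty$, with an error $o(1)$ whose rate depends on $i$. So we first pass to a sparse subsequence $j_{1}<j_{2}<\cdots$ along which all the needed errors are uniformly small, and set $F_{\ell}=E_{j_{\ell}}^{b}(\tau)$. Since $\limsup_{\ell}F_{\ell}\subseteq\limsup_{j}E_{j}^{b}(\tau)$, full measure for the subsequence gives \eqref{fulllimsupformula}.

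\emph{Choice of subsequence.} Write $\kappa=\delta_{d}(2\tau)^{d}>0$. By Lemma~\ref{targetsize}, there is $j_{0}$ such that $\operatorname{Leb}(E_{j}^{b}(\tau))\geqslant\kappa/2$ for all $j\geqslant j_{0}$; take $j_{1}\geqslant j_{0}$. Inductively, having chosen $j_{1},\ldots,j_{\ell-1}$, apply Lemma~\ref{quasi} once for each $i=j_{1},\ldots,j_{\ell-1}$, and apply Lemma~\ref{targetsize}. This lets us pick $j_{\ell}>j_{\ell-1}$ such that
\begin{equation*}
\big|\operatorname{Leb}(E_{j_{a}}^{b}(\tau)\cap E_{j_{\ell}}^{b}(\tau))-\operatorname{Leb}(E_{j_{a}}^{b}(\tau))\operatorname{Leb}(E_{j_{\ell}}^{b}(\tau))\big|\leqslant 2^{-\ell}
\end{equation*}
for every $a<\ell$. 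This is possible because only finitely many $i$ are involved at each stage.

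\emph{Verification of the hypotheses of Lemma~\ref{SMBC}.} With $\mu_{\ell}=\operatorname{Leb}(F_{\ell})\geqslant\kappa/2$, we get $S_{L}\geqslant L\kappa/2\to\infty$. Split the double sum in \eqref{SMBCcondition} into diagonal and off-diagonal parts:
\begin{equation*}
\sum_{a,\ell=1}^{L}\operatorname{Leb}(F_{a}\cap F_{\ell})=\sum_{\ell=1}^{L}\mu_{\ell}+2\sum_{1\leqslant a<\ell\leqslant L}\big(\mu_{a}\mu_{\ell}+\varepsilon_{a,\ell}\big),
\end{equation*}
where $|\varepsilon_{a,\ell}|\leqslant 2^{-\ell}$. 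The diagonal contributes $S_{L}=O(L)$. The error terms contribute at most $2\sum_{\ell}(\ell-1)2^{-\ell}=O(1)$. The main off-diagonal part equals $S_{L}^{2}-\sum_{\ell}\mu_{\ell}^{2}=S_{L}^{2}-O(L)$. Since $S_{L}^{2}\geqslant L^{2}\kappa^{2}/4$, every error is $o(S_{L}^{2})$, so \eqref{SMBCcondition} holds. Lemma~\ref{SMBC} then gives $\operatorname{Leb}(\limsup_{\ell}F_{\ell})=1$.

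The main obstacle is the non-uniformity of the $o(1)$ in Lemma~\ref{quasi}. Summing it naively over all pairs $(i,j)$ would fail, because the rate of convergence for large $i$ is uncontrolled. The diagonal extraction above resolves this: Lemma~\ref{quasi} only ever needs to be invoked for finitely many fixed $i$ at each stage. Everything else is a direct computation.
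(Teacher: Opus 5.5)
Your proposal is correct and follows essentially the same argument as the paper: extract a subsequence inductively so that the quasi-independence errors from Lemma~\ref{quasi} satisfy $|\operatorname{Leb}(F_a\cap F_\ell)-\operatorname{Leb}(F_a)\operatorname{Leb}(F_\ell)|\leqslant 2^{-\ell}$ for all $a<\ell$, then check the second-moment condition and apply Lemma~\ref{SMBC}. The only cosmetic difference is that you require the lower bound $\operatorname{Leb}(F_\ell)\geqslant\kappa/2$, whereas the paper requires $|\operatorname{Leb}(F_\ell)-c_\tau|\leqslant 2^{-\ell}$; either works, since all that is needed is that $S_L$ grows linearly in $L$, so that $O(L)=o(S_L^2)$.
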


\begin{proof}
Put
\begin{equation*}
    c_{\tau} = \delta_{d}(2\tau)^{d} > 0.
\end{equation*}
By \hyperref[targetsize]{Lemma~\ref{targetsize}},
\begin{equation}\label{targetlimit}
    \operatorname{Leb}(E_{j}^{b}(\tau)) = c_{\tau}+o(1)\qquad \text{as}\ j\to\infty.
\end{equation}
By \hyperref[quasi]{Lemma~\ref{quasi}}, for each fixed $i$,
\begin{equation}\label{fixediquasi}
    \operatorname{Leb}(E_{i}^{b}(\tau) \cap E_{j}^{b}(\tau)) = \operatorname{Leb}(E_{i}^{b}(\tau))\operatorname{Leb}(E_{j}^{b}(\tau)) + o(1)
\end{equation}
as $j \to \infty$.

Choose a subsequence $j_{\ell} \to \infty$ inductively so that, with
\begin{equation*}
    F_{\ell} = E_{j_{\ell}}^{b}(\tau),
\end{equation*}
we have
\begin{equation}\label{measureclose}
    \big| \operatorname{Leb}(F_{\ell}) - c_{\tau} \big| \leqslant 2^{-\ell}
\end{equation}
and, for every $1 \leqslant a < \ell$,
\begin{equation}\label{pairclose}
    \Big| \operatorname{Leb}(F_{a} \cap F_{\ell}) - \operatorname{Leb}(F_{a})\operatorname{Leb}(F_{\ell}) \Big| \leqslant 2^{-\ell}.
\end{equation}
This is possible by \eqref{targetlimit} and \eqref{fixediquasi}.

Let
\begin{equation*}
    S_{L} = \sum_{\ell=1}^{L} \operatorname{Leb}(F_{\ell}).
\end{equation*}
Then by \eqref{measureclose},
\begin{equation}\label{SLsim}
    S_{L} = c_{\tau} L + O(1).
\end{equation}
Furthermore,
\begin{equation*}
    \sum_{a,\ell=1}^{L} \operatorname{Leb}(F_{a} \cap F_{\ell}) = \sum_{\ell=1}^{L} \operatorname{Leb}(F_{\ell}) + 2\sum_{1 \leqslant a < \ell \leqslant L} \operatorname{Leb}(F_{a} \cap F_{\ell}).
\end{equation*}
Using \eqref{pairclose},
\begin{equation*}
    \sum_{a,\ell=1}^{L} \operatorname{Leb}(F_{a} \cap F_{\ell}) = \sum_{\ell=1}^{L} \operatorname{Leb}(F_{\ell}) + 2\sum_{1 \leqslant a < \ell \leqslant L} \operatorname{Leb}(F_{a})\operatorname{Leb}(F_{\ell}) + O\bigg( \sum_{\ell=1}^{\infty} \ell 2^{-\ell} \bigg).
\end{equation*}
Since
\begin{equation*}
    S_{L}^{2}=\sum_{\ell=1}^{L} \operatorname{Leb}(F_{\ell})^{2} + 2\sum_{1 \leqslant a < \ell \leqslant L} \operatorname{Leb}(F_{a})\operatorname{Leb}(F_{\ell})
\end{equation*}
we have
\begin{equation*}
    \sum_{\ell=1}^{L} \operatorname{Leb}(F_{\ell}) + 2\sum_{1 \leqslant a < \ell \leqslant L} \operatorname{Leb}(F_{a})\operatorname{Leb}(F_{\ell}) = S_{L}^{2} + \sum_{\ell=1}^{L} \operatorname{Leb}(F_{\ell})\big( 1 - \operatorname{Leb}(F_{\ell}) \big),
\end{equation*}
we get
\begin{equation*}
    \sum_{a,\ell=1}^{L} \operatorname{Leb}(F_{a} \cap F_{\ell}) = S_{L}^{2} + O(L).
\end{equation*}
Because $S_{L} \sim c_{\tau}L$ according to \eqref{SLsim}, we have $O(L)=o(S_{L}^{2})$. Therefore,
\begin{equation*}
    \sum_{a,\ell=1}^{L} \operatorname{Leb}(F_{a} \cap F_{\ell}) = (1 + o(1))S_{L}^{2}.
\end{equation*}
By \hyperref[SMBC]{Lemma~\ref{SMBC}}, we have
\begin{equation*}
    \operatorname{Leb}\bigg( \limsup_{\ell\to\infty} F_{\ell} \bigg) = 1.
\end{equation*}
Since $j_{\ell}$ is a subsequence of $j$, it is obvious that
\begin{equation*}
    \limsup_{\ell\to\infty} F_{\ell} \subseteq \limsup_{j\to\infty} E_{j}^{b}(\tau),
\end{equation*}
we obtain \eqref{fulllimsupformula}.
\end{proof}

\section{Proof of main theorem}

We first prove the result on a fixed fundamental cube.

\begin{proposition}\label{fixedbprop}
Let $\alpha\in \Lambda^{d}$ with $d\geqslant 2$. Fix $b \in \mathbb{Z}^{d}$.  Then for almost every $\eta \in [0,1)^{d}$,
\begin{equation}\label{fixedbclaim}
    \liminf_{n\to\infty} n^{1/d} \min_{\substack{m\in\mathbb{Z}^{d}\\ \gcd(n,m_{1},\ldots,m_{d})=1}} \|b + \eta - n\alpha + m\|_{\infty} = 0.
\end{equation}
\end{proposition}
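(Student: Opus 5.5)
Fix $b$, take a countable sequence $\tau_k \downarrow 0$ with $0<\tau_k<c_0/8$, and apply Lemma~\ref{fulllimsup} to each $\tau_k$. The intersection
\begin{equation*}
    \Omega_b=\bigcap_{k}\limsup_{j\to\infty}E^{b}_{j}(\tau_k)
\end{equation*}
then has full measure in $[0,1)^d$. We will show that every $\eta\in\Omega_b$ satisfies \eqref{fixedbclaim}. Since the orbit points $\{n\alpha\}$ are dense and we may exclude a null set, we may also assume that $\eta$ is not of the form $\{n\alpha\}$ and that $\eta$ avoids the boundary of the unit cube.

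Fix $\eta\in\Omega_b$ and $k$. There are infinitely many $j$ with $\eta\in E^b_j(\tau_k)$. For each such $j$ we get some $n=n_j\le N_j$ with $n\in\mathcal P_b$ and $\|\eta-\{n\alpha\}\|_\infty<r_j=\tau_kN_j^{-1/d}$. Write $\{n\alpha\}=n\alpha-\lfloor n\alpha\rfloor$ coordinatewise and set $m=\lfloor n\alpha\rfloor-b\in\mathbb Z^d$. Then
\begin{equation*}
    b+\eta-n\alpha+m=\eta-\{n\alpha\},
\end{equation*}
and membership $n\in\mathcal P_b$ says exactly that $\gcd(n,m_1,\dots,m_d)=1$. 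Since $n\le N_j$, we obtain
\begin{equation*}
    n^{1/d}\min_{\gcd=1}\|b+\eta-n\alpha+m\|_\infty\le N_j^{1/d}r_j=\tau_k .
\end{equation*}

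It remains to check that $n_j\to\infty$ along these $j$, so that the bound actually feeds the $\liminf$. This is the main point requiring care. If the $n_j$ stayed bounded, then some fixed $n$ would occur for infinitely many $j$. Since $r_j\to0$, this would force $\eta=\{n\alpha\}$, which was excluded. Hence $n_j\to\infty$, and therefore the $\liminf$ in \eqref{fixedbclaim} is at most $\tau_k$. Letting $k\to\infty$ gives \eqref{fixedbclaim} for every $\eta\in\Omega_b$, that is, for almost every $\eta$.
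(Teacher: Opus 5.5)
Your proposal is correct and follows the paper's proof essentially step for step: you intersect $\limsup_{j} E^{b}_{j}(\tau_k)$ over a countable sequence $\tau_k\downarrow 0$, remove the orbit, set $m=\lfloor n\alpha\rfloor-b$ so that $b+\eta-n\alpha+m=\eta-\{n\alpha\}$, and use $\eta\notin\operatorname{Orb}_{\alpha}$ to force $n_j\to\infty$. One small wording point: the orbit may be discarded because it is countable (hence null), not because it is dense, and excluding $\partial[0,1)^{d}$ is unnecessary.
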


\begin{proof}
Let $\tau_{k} \downarrow 0$ be a sequence with $0 < \tau_{k} < c_{0}/8$ for each $k\geqslant 1$. By \hyperref[fulllimsup]{Lemma~\ref{fulllimsup}}, for each $k$,
\begin{equation*}
    \operatorname{Leb}\bigg( \limsup_{j\to\infty} E_{j}^{b}(\tau_{k}) \bigg) = 1.
\end{equation*}
Therefore the set
\begin{equation*}
    \Omega_{b}' = \bigcap_{k=1}^{\infty} \bigg(\limsup_{j\to\infty} E_{j}^{b}(\tau_{k})\bigg)
\end{equation*}
has full measure in $[0,1)^{d}$.

Moreover, remove the countable orbit
\begin{equation*}
    \operatorname{Orb}_{\alpha} = \big\{ \{n\alpha\} : n \geqslant 1 \big\}.
\end{equation*}
The set
\begin{equation*}
    \Omega_{b} = \Omega_{b}' \setminus \operatorname{Orb}_{\alpha}
\end{equation*}
still has full measure.

Fix $\eta \in \Omega_{b}$. Let $k \geqslant 1$. Since $\eta \in \limsup_{j\to\infty} E_{j}^{b}(\tau_{k})$, there exist infinitely many $j$ such that $\eta \in E_{j}^{b}(\tau_{k})$. For each such $j$, there is $1 \leqslant n_{j} \leqslant N_{j}$ satisfying
\begin{equation}\label{njprimitive}
    n_{j} \in \mathcal{P}_{b}
\end{equation}
and
\begin{equation}\label{hitestimate}
    \|\eta - \{n_{j}\alpha\}\|_{\infty} < \tau_{k} N_{j}^{-1/d}.
\end{equation}

The integers $n_{j}$ occurring for infinitely many $j$'s must be unbounded. Indeed, if a fixed $n$ occurred along infinitely many $j$, then $N_{j} \to \infty$ and \eqref{hitestimate} would force $\eta = \{n\alpha\}$, contrary to $\eta \notin \operatorname{Orb}_{\alpha}$. Thus we may select a subsequence with $n_{j} \to \infty$.

For each selected $j$, define
\begin{equation*}
    m_{j} = \big( \lfloor n_{j}\alpha_{1}\rfloor - b_{1}, \ldots, \lfloor n_{j}\alpha_{d}\rfloor - b_{d} \big) \in \mathbb{Z}^{d}.
\end{equation*}
By \eqref{njprimitive} and the definition of $\mathcal{P}_{b}$,
\begin{equation}\label{mjprimitive}
    \gcd(n_{j}, m_{j,1}, \ldots, m_{j,d}) = 1.
\end{equation}
Moreover,
\begin{equation}\label{identity}
    b + \eta - n_{j}\alpha + m_{j} = \eta - \{n_{j}\alpha\}.
\end{equation}
Combining \eqref{hitestimate} and \eqref{identity},
\begin{equation*}
    \|b + \eta - n_{j}\alpha + m_{j}\|_{\infty} < \tau_{k} N_{j}^{-1/d}.
\end{equation*}
Since $n_{j} \leqslant N_{j}$,
\begin{equation*}
    n_{j}^{1/d} \|b + \eta - n_{j}\alpha + m_{j}\|_{\infty} < \tau_{k} \bigg(\frac{n_{j}}{N_{j}}\bigg)^{1/d} \leqslant \tau_{k}.
\end{equation*}
Using \eqref{mjprimitive}, we obtain
\begin{equation*}
    \liminf_{n\to\infty} n^{1/d} \min_{\substack{m\in\mathbb{Z}^{d}\\ \gcd(n,m_{1},\ldots,m_{d})=1}} \|b + \eta - n\alpha + m\|_{\infty} \leqslant \tau_{k}.
\end{equation*}
This holds for every $k$. Letting $k \to \infty$ gives \eqref{fixedbclaim}.
\end{proof}

\begin{proof}[Proof of Theorem \ref{mainthm}]
For each $b \in \mathbb{Z}^{d}$, \hyperref[fixedbprop]{Proposition~\ref{fixedbprop}} gives a full-measure subset $\Omega_{b} \subseteq [0,1)^{d}$ such that \eqref{fixedbclaim} holds for every $\eta \in \Omega_{b}$.

Define
\begin{equation*}
    \Omega = \bigcup_{b\in\mathbb{Z}^{d}} (b + \Omega_{b}) \subseteq \mathbb{R}^{d}.
\end{equation*}
Since $\mathbb{R}^{d}$ is the countable union of the translates $b + [0,1)^{d}$, up to boundary sets of measure zero, the set $\Omega$ has full Lebesgue measure in $\mathbb{R}^{d}$. If $\gamma \in \Omega$, then $\gamma = b + \eta$ for some $b \in \mathbb{Z}^{d}$ and $\eta \in \Omega_{b}$. \hyperref[fixedbprop]{Proposition~\ref{fixedbprop}} gives exactly \eqref{mainclaim}. This proves the theorem.
\end{proof}

\appendix
\section{Full measure of non-singular set}

\begin{theorem}\label{fullLebLambda}
    The subset $\Lambda^{d}\subseteq \mathbb{R}^{d}$ has full Lebesgue measure.
\end{theorem}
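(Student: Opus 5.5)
We need to show that almost every $\alpha\in\mathbb{R}^d$ satisfies two things: $1,\alpha_1,\ldots,\alpha_d$ are linearly independent over $\mathbb{Q}$, and there exist $c_0>0$ and $N_j\to\infty$ with
\begin{equation*}
    \min_{1\leqslant n\leqslant N_j}\|n\alpha\|_{\mathbb{T}^{d},\infty}\geqslant c_0N_j^{-1/d}.
\end{equation*}
Rational dependence confines $\alpha$ to a countable union of affine hyperplanes, which is a null set. It therefore suffices to prove the non-singularity condition for almost every $\alpha$.

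By $\mathbb{Z}^d$-periodicity we may work with $\alpha\in[0,1)^d$. The plan is to show that for a fixed small $c>0$ and every $N$,
\begin{equation*}
    \operatorname{Leb}\big\{\alpha\in[0,1)^d:\min_{1\leqslant n\leqslant N}\|n\alpha\|_{\mathbb{T}^d,\infty}<cN^{-1/d}\big\}\leqslant \beta<1,
\end{equation*}
with $\beta$ independent of $N$. A union bound gives this directly. For each $n$, the map $\alpha\mapsto n\alpha$ preserves Lebesgue measure on $\mathbb{T}^d$, so
\begin{equation*}
    \operatorname{Leb}\{\|n\alpha\|_{\mathbb{T}^d,\infty}<\varepsilon\}=(2\varepsilon)^d .
\end{equation*}
Summing over $n\leqslant N$ with $\varepsilon=cN^{-1/d}$ gives a bound of $N(2c)^dN^{-1}=(2c)^d$, which is less than $1/2$ once $c$ is small. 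Let
\begin{equation*}
    G_N=\big\{\alpha:\min_{1\leqslant n\leqslant N}\|n\alpha\|_{\mathbb{T}^d,\infty}\geqslant cN^{-1/d}\big\}.
\end{equation*}
Then $\operatorname{Leb}(G_N)\geqslant 1/2$ for every $N$.

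Next we pass to the limsup. The set of $\alpha$ for which the good condition holds along some $N_j\to\infty$ with constant $c$ contains $\limsup_N G_N$. By the reverse Fatou inequality, $\operatorname{Leb}(\limsup_N G_N)\geqslant\limsup_N\operatorname{Leb}(G_N)\geqslant 1/2$. So the set $\Lambda'$ of $\alpha$ satisfying the condition with some constant $c_0>0$ has positive measure.

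The main obstacle is upgrading positive measure to full measure. I would use ergodicity. Let $g_t=\operatorname{diag}(e^{t/d},\ldots,e^{t/d},e^{-t})$ act on $\mathrm{SL}_{d+1}(\mathbb{R})/\mathrm{SL}_{d+1}(\mathbb{Z})$, and let $u_\alpha$ be the unipotent matrix with $\alpha$ in its last column. By Dani's correspondence, $\alpha$ satisfies the good condition for some $c_0$ exactly when the orbit $g_tu_\alpha\mathbb{Z}^{d+1}$ returns infinitely often to a fixed compact set, that is, when it does not diverge. The set $\Lambda'$ is invariant under $\alpha\mapsto\alpha+k$ for $k\in\mathbb{Z}^d$. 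More usefully, the divergence condition is insensitive to changing $\alpha$ along the expanding horosphere directions relative to the cusp behaviour. Moore ergodicity of the $g_t$-action then shows that the set of non-divergent points in the space of lattices is either null or conull. Since its lift to $\alpha$-coordinates has positive measure, it is conull, and by Fubini almost every $\alpha$ is non-divergent.

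As a more elementary alternative that avoids homogeneous dynamics, I would try to show the sets $G_N$ are quasi-independent along a sparse sequence $N_k$, for instance $N_{k+1}\gg N_k$. The aim is a bound $\operatorname{Leb}(G_{N_k}^c\cap G_{N_l}^c)\leqslant \operatorname{Leb}(G_{N_k}^c)\operatorname{Leb}(G_{N_l}^c)+o(1)$, which would give full measure via a zero-one law or Lemma~\ref{SMBC}. Failing that, I would use a Lebesgue density argument: the property is invariant under the maps $\alpha\mapsto q\alpha \bmod 1$ up to changing $c_0$, and these maps are ergodic on $\mathbb{T}^d$ for integers $q\geqslant2$. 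This invariance needs checking. If $\alpha$ satisfies the condition with constant $c_0$ along $N_j$, then applying it at scale $N_j$ to the multiples $qn$ with $n\leqslant N_j/q$ shows $q\alpha$ satisfies it along $N_j/q$ with constant $c_0q^{-1/d}$. The converse inclusion, which full invariance requires, is the delicate point, and that is where I expect the real work to lie.
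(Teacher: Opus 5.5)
Your union bound and reverse-Fatou step are the paper's argument in complemented form. The paper proves $\operatorname{Leb}(\liminf_N A_N(\varepsilon))\leqslant(2\varepsilon)^d$ with $A_N(\varepsilon)=[0,1)^d\setminus G_N(\varepsilon)$. That is the same as your $\operatorname{Leb}(\limsup_N G_N(c))\geqslant\limsup_N\operatorname{Leb}(G_N(c))\geqslant 1-(2c)^d$, before you weaken it to $1/2$.

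The gap is what comes next. You fix one $c$, conclude only positive measure, and call the upgrade to full measure the main obstacle. You then leave it to a sketched ergodic argument or to alternatives you admit are unfinished, so as written the proof does not close. The obstacle is illusory, because nothing forces you to fix $c$. For every $c\in(0,1/2)$, the set $\Lambda'$ contains $\limsup_N G_N(c)$, since any $\alpha$ there has infinitely many good scales with constant $c_0=c$. Hence $\operatorname{Leb}(\Lambda'\cap[0,1)^d)\geqslant 1-(2c)^d$ for every such $c$, and letting $c\to0$ gives full measure. This is exactly how the paper finishes: the singular set is $\Sigma=\bigcap_k S(1/k)$ with $\operatorname{Leb}(S(\varepsilon))\leqslant(2\varepsilon)^d$, so $\Sigma$ is null.

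Both fallback routes could be completed, but not quite as you describe them.

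\emph{Homogeneous-dynamics route.} The invariance is misplaced. Divergence of $g_tx$ is unchanged when $x$ is moved by a block lower-triangular $h$, for which $g_thg_{-t}$ stays bounded as $t\to+\infty$. By contrast, the $u_\alpha$-directions are the expanding ones, which are exactly the coordinates you integrate over. The Fubini transfer relies on the local product of these two directions.

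\emph{$\times q$ route.} The converse inclusion you worry about is unnecessary. Let $T_q\alpha=q\alpha\bmod\mathbb{Z}^d$. You showed $T_q(\Lambda')\subseteq\Lambda'$, i.e.\ $\Lambda'\subseteq T_q^{-1}\Lambda'$. Since $T_q$ preserves Lebesgue measure, this forces $\operatorname{Leb}(T_q^{-1}\Lambda'\setminus\Lambda')=0$. So $\Lambda'$ is invariant modulo null sets, and ergodicity of $T_q$ applies.

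Either way, these are far heavier than the one-line limit $c\to0$.
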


\begin{proof}
It suffices to work on $\mathbb{T}^{d}$, identified with $[0,1)^{d}$.

For $N\geqslant1$, define
\begin{equation*}
    \psi_{\alpha}(N)\coloneq \min_{1\leqslant q\leqslant N}\|q\alpha\|_{\mathbb{T}^{d},\infty}.
\end{equation*}
We estimate the measure of
\begin{equation*}
    \Sigma\coloneq \bigg\{\alpha\in\mathbb{T}^{d}:\limsup_{N\to\infty}N^{1/d}\psi_{\alpha}(N)=0\bigg\}.
\end{equation*}
For $\varepsilon>0$, set
\begin{equation*}
    A_{N}(\varepsilon)\coloneq \big\{\alpha\in\mathbb{T}^{d}:\exists\,1\leqslant q\leqslant N\text{ such that }\|q\alpha\|_{\mathbb{T}^{d},\infty}<\varepsilon N^{-1/d}\big\}.
\end{equation*}
For fixed $q$, the map $\alpha\mapsto q\alpha\bmod\mathbb{Z}^{d}$ preserves Haar measure on $\mathbb{T}^{d}$. Hence, for all sufficiently large $N$,
\begin{equation*}
    \operatorname{Leb}\big(\{\alpha\in\mathbb{T}^{d}:\|q\alpha\|_{\mathbb{T}^{d},\infty}<\varepsilon N^{-1/d}\}\big)=(2\varepsilon)^{d}N^{-1}.
\end{equation*}
Therefore,
\begin{equation*}
    \operatorname{Leb}(A_{N}(\varepsilon))\leqslant \sum_{q=1}^{N}(2\varepsilon)^{d}N^{-1}=(2\varepsilon)^{d}.
\end{equation*}
Let
\begin{equation*}
    S(\varepsilon)\coloneq \big\{\alpha\in\mathbb{T}^{d}:\psi_{\alpha}(N)<\varepsilon N^{-1/d}\text{ for all sufficiently large }N\big\}.
\end{equation*}
Then
\begin{equation*}
    S(\varepsilon)\subseteq \liminf_{N\to\infty}A_{N}(\varepsilon).
\end{equation*}
By Fatou's Lemma, we obtain
\begin{equation*}
    \operatorname{Leb}(S(\varepsilon))\leqslant \operatorname{Leb}\bigg(\liminf_{N\to\infty}A_{N}(\varepsilon)\bigg)\leqslant \liminf_{N\to\infty}\operatorname{Leb}(A_{N}(\varepsilon))\leqslant (2\varepsilon)^{d}.
\end{equation*}
Since
\begin{equation*}
    \Sigma=\bigcap_{k=1}^{\infty}S(1/k),
\end{equation*}
we have, for every $k\geqslant1$,
\begin{equation*}
    \operatorname{Leb}(\Sigma)\leqslant \operatorname{Leb}(S(1/k))\leqslant (2/k)^{d}.
\end{equation*}
Letting $k\to\infty$, we get
\begin{equation*}
    \operatorname{Leb}(\Sigma)=0.
\end{equation*}

It remains to impose rational independence. The set of rationally dependent $\alpha$'s is contained in the countable union
\begin{equation*}
    \bigcup_{\substack{k_{0}\in\mathbb{Z}\\ (k_{1},\ldots,k_{d})\in\mathbb{Z}^{d}\setminus\{0\}}}\big\{\alpha\in\mathbb{R}^{d}:k_{0}+k_{1}\alpha_{1}+\cdots+k_{d}\alpha_{d}=0\big\}.
\end{equation*}
This is a countable union of proper affine hyperplanes and hence has Lebesgue measure zero. Combining this with $\operatorname{Leb}(\Sigma)=0$, and then extending periodically from $[0,1)^{d}$ to $\mathbb{R}^{d}$, proves that $\Lambda^{d}$ has full Lebesgue measure.
\end{proof}

\section{Proof of \hyperref[SMBC]{Lemma~\ref{SMBC}}}
\label{ABC}
\begin{proof}
Fix $K\geqslant1$. For $L\geqslant K$, define
\begin{equation*}
    S_{K,L}\coloneq \sum_{\ell=K}^{L}\mu(F_{\ell}),\qquad I_{K,L}\coloneq \sum_{a,\ell=K}^{L}\mu(F_{a}\cap F_{\ell}).
\end{equation*}
Since $K$ is fixed and $S_L\to\infty$, we have
\begin{equation*}
    S_{K,L}=S_L-\sum_{\ell=1}^{K-1}\mu(F_{\ell})=S_L+O_K(1)\sim S_L.
\end{equation*}
Moreover, a direct calculation implies that
\begin{equation*}
    0\leqslant \sum_{a,\ell=1}^{L}\mu(F_{a}\cap F_{\ell})-I_{K,L}\leqslant 2\sum_{a=1}^{K-1}\sum_{\ell=1}^{L}\mu(F_{a}\cap F_{\ell})\leqslant 2KS_L=o(S_L^2).
\end{equation*}
Therefore, combining the above inequality with \eqref{SMBCcondition} and $S_{K,L}\sim S_L$,
\begin{equation}\label{IS}
    I_{K,L}=(1+o(1))S_{K,L}^{2}.
\end{equation}

Now set
\begin{equation*}
    Z_{K,L}(x)\coloneq \sum_{\ell=K}^{L}\mathbf{1}_{F_{\ell}}(x).
\end{equation*}
Then
\begin{equation*}
    \int_X Z_{K,L}(x)\,\mathrm{d}\mu(x)=S_{K,L},\qquad \int_X Z_{K,L}(x)^2\,\mathrm{d}\mu(x)=I_{K,L}.
\end{equation*}
By the Cauchy--Schwarz inequality,
\begin{equation*}
    S_{K,L}=\int_{\bigcup_{\ell=K}^{L}F_{\ell}}Z_{K,L}(x)\,\mathrm{d}\mu(x)\leqslant \mu\bigg(\bigcup_{\ell=K}^{L}F_{\ell}\bigg)^{1/2}I_{K,L}^{1/2}.
\end{equation*}
Thus by \eqref{IS},
\begin{equation*}
    \mu\bigg(\bigcup_{\ell=K}^{L}F_{\ell}\bigg)\geqslant \frac{S_{K,L}^{2}}{I_{K,L}}=1-o(1).
\end{equation*}
Letting $L\to\infty$, we obtain
\begin{equation*}
    \mu\bigg(\bigcup_{\ell=K}^{\infty}F_{\ell}\bigg)=1.
\end{equation*}
Since $K\geqslant1$ is arbitrary,
\begin{equation*}
    \mu\bigg(\limsup_{\ell\to\infty}F_{\ell}\bigg)=\mu\bigg(\bigcap_{K=1}^{\infty}\bigcup_{\ell=K}^{\infty}F_{\ell}\bigg)=1.
\end{equation*}
The proof is complete.
\end{proof}

\section*{Acknowledgments}
X. Wang thanks Prof. Wencai Liu for useful discussion. This work was supported by NSF DMS-2246031
and NSF DMS-2052572.

\bibliography{main}
\end{document}